\theoremstyle{plain}
\newtheorem{thm}{Theorem}[section]
\newtheorem{lemm}[thm]{Lemma}
\theoremstyle{definition}
\newtheorem{defn}[thm]{Definition}
\newtheorem{rmk}[thm]{Remark}
\def\Z{{\mathbb Z}}
\def\cd{\protect\operatorname{cd}}
\def\cat{\protect\operatorname{cat}}
\title{On numerical invariants of retraction map}
\author{Nursultan Kuanyshov}
\address{Nursultan Kuanyshov, Suleyman Demirel University, Kaskelen, Kazakhstan}
\email{nursultan.kuanyshov@sdu.edu.kz, kuanyshov.nursultan@gmail.com}
\subjclass[2010]{Primary 50M30; Secondary 55N25, 55M10}
\keywords{Sequential topological complexity, topological complexity, cohomological dimension, Lusternik-Schnirelmann category, retraction map}
\begin{document}
\maketitle
\begin{abstract}
We introduce a notion of retraction between continuous maps of topological spaces and study the behavior of several numerical invariants under such retractions. These include (co)homological dimensions, the Lusternik-Schnirelmann category, the topological complexity, and the sequential topological complexity. We prove that, under the retraction map, the corresponding inequalities between invariants hold. Our results also apply to recent invariants are defined by  Dranishnikov, Jauhari \cite{DJ} and Knudsen, Weinberger \cite{KW}.

\end{abstract}

\section{Introduction}

Over the past five decades, numerical invariants such as (co)homological dimensions, the Lusternik-Schnirelmann category, the topological complexity, and their various generalizations have played a central role in algebraic topology (\cite{CLOT, DK, EFMO, Fa1, Fa2, Fa3, Ku, Pa1, Pa2, Rud}). These invariants quantify the minimal resources needed for certain topological operations, such as covering spaces by contractible subsets, constructing motion planners, or determining (co)homological finiteness properties. They are homotopy invariants and have deep connections to both abstract homotopy theory and applications such as robotics and control systems.

In this paper, we generalize the classical notion of a retraction of topological spaces to the setting of \emph{retraction map} of maps between topological spaces. This broader framework allows us to analyze how numerical invariants behave not just under inclusions and projections, but under more general maps of spaces.

Our main motivation is the following phenomenon (the monotonicity property): In general, if $A\subseteq X$, the inequality $\operatorname{inv}(A) \leq \operatorname{inv}(X)$ for a given numerical invariant $\operatorname{inv}$ may not hold (see a concrete example in Section 4 for Lusternik-Schnirelmann category). However, we prove that if there exists a retraction map from $\mathrm{Id}_X$ to $\mathrm{Id}_A$ where $\mathrm{Id}$ is the identity map between topological spaces X, A respectively, then such inequalities do hold for a broad class of invariants, including the ones mentioned above.

Furthermore, we include recent developments on \emph{numerical invariants of maps}, which extend classical notions from spaces to maps. Notable contributions include the topological complexity of maps introduced by Scott \cite{Sc}, Kuanyshov \cite{Ku} and distributional invariants developed by Dranishnikov, Jauhari \cite{DJ} and  Knudsen, Weinberger \cite{KW}. 

\vspace{1em}
The paper is organized as follows. In Section 2, we give the definition of a retraction map in general settings and recall numerical invariants, namely (co)homological dimensions, the Lusternik-Schnirelmann category, the topological complexity and the sequential topological complexity of space. In Section 3, we prove the cohomological dimension of retraction homomorphisms. In Section 4, we prove the Lusternik-Schnirelmann category of retraction maps. In Section 5, we prove the sequential topological complexity of retraction maps.

In the paper, we use the notation $H^*(\Gamma, A)$ for the cohomology of a group $\Gamma$ with coefficient in $\Gamma$-module $A$. The cohomology groups of a space $X$ with the fundamental group $\Gamma$ we denote as $H^*(X;A)$. Thus, $H^*(\Gamma,A)=H^*(B\Gamma;A)$ where $B\Gamma=K(\Gamma,1)$. The maps are continous functions while spaces are normal topological spaces.  

\section{Retraction Maps and Numerical Invariants}

\subsection{Retraction Map}

Let $f: X \to Y$ and $f': X' \to Y'$ be continuous maps between topological spaces. We say that $f'$ is a \emph{retract of $f$}  if there exist continuous maps
$$
r_X: X \to X', \qquad r_Y: Y \to Y'
$$
such that the following diagram commutes:
$$
\begin{tikzcd}
X \arrow[r,"f"] \arrow[d,"r_X"'] & Y \arrow[d,"r_Y"]\\
X' \arrow[r,"f'"'] & Y'
\end{tikzcd}
\quad \text{with} \quad
f' \circ r_X = r_Y \circ f.
$$

When $f=\mathrm{Id}_X$ and $f'=\mathrm{Id}_A$ for some subspace $A \subseteq X$, this definition reduces to the classical notion of a retraction $r:X\to A$ with $r|_A = \mathrm{id}_A$.

\subsection{Numerical Invariants of Topological Spaces}

We briefly recall the classical numerical invariants considered in this paper.

\begin{defn}
The Lusternik-Schnirelmann category of a topological space X, $\cat(X)$ is the minimal number n such that X admits an open cover by $n+1$ open sets $U_{0},U_{1},..., U_{n}$ and each $U_{i}$ is contractible in X. 
\end{defn}

\begin{defn}[Rudyak, 2009]\label{F TC}
    Let $X$ be a path-connected space.
\begin{enumerate}
\item A {\em sequential motion planner} on a subset $U\subset X^{r}$ is a map $s:U\to PX$ such that $s(x_{0},x_{1},\cdots,x_{r-1})(\frac{j}{r-1})=x_{j}$ for all $j=0,\cdots,r-1.$

\item The {\em sequential topological complexity of a path-connected space $X$}, denoted $TC_{r}(X),$ is the minimal number $k$ such that $X^{r}$ is covered by $k+1$ open sets $U_{0},\cdots,U_{k}$ on which there are sequential motion planners. If no such $k$ exists, we set $TC_{r}(X)=\infty.$
\end{enumerate}
    
\end{defn}

When $r=2$, we recover the famous Farber's topological complexity \cite{Fa1,Fa2,Fa3}.

\begin{defn}
 For a ring $R$, the cohomological dimension of $X$ is
$$\operatorname{cd}_R(X) = \sup \{ n \in \mathbb{N} \mid H^n(X; M) \neq 0 \text{ for some } R\text{-module } M \} $$

\end{defn}

The homological dimension $\operatorname{hd}_R(X)$ is defined analogously using homology. (see more details in \cite{Br}).

In the proof of our main result on the cohomological dimension of retraction homomorphism, we use Shapiro's lemma ~\cite{Br}[Proposition 6.2, page 73]. 

\begin{thm}[``Shapiro's lemma"]{\label{Shapiro}}
    If $i:\Gamma'\to \Gamma$ is a monomorphism and $M$ is an $\Z\Gamma'$-module, then the through homomorphism
     $$H^{*}(\Gamma,Coind_{\Gamma'}^{\Gamma}M)\stackrel{i^*}\to H^*(\Gamma',Coind_{\Gamma'}^{\Gamma}M)\stackrel{\alpha_*}\to H^*(\Gamma',M)$$
     is an isomorphism,
where $Coind_{\Gamma'}^{\Gamma}M=Hom_{\Z\Gamma'}(\Z\Gamma,M)$ and the homomorphism of coefficients $\alpha:Hom_{\Z \Gamma'}(\Z \Gamma,M)\to M$ is defined as $\alpha(f)=f(e)$.
\end{thm}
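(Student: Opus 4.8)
The plan is to prove the statement by identifying the composite $\alpha_*\circ i^*$ with the standard restriction–coinduction adjunction isomorphism at the level of cochains, and then invoking the fact that this adjunction is a degreewise isomorphism compatible with the differentials.

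First I would fix a free resolution $P_*$ of $\Z$ over $\Z\Gamma$. The crucial structural observation is that, since $i:\Gamma'\to\Gamma$ is injective, $\Z\Gamma$ is free as a $\Z\Gamma'$-module, a basis being any set of representatives for the cosets $\Gamma'\backslash\Gamma$. Consequently each $P_n$, being free over $\Z\Gamma$, is also free (hence projective) over $\Z\Gamma'$, so the restricted complex $P_*$ is simultaneously a projective resolution of $\Z$ over $\Z\Gamma'$. This allows me to compute both $H^*(\Gamma,-)$ and $H^*(\Gamma',-)$ from the single resolution $P_*$.

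Next I would unwind the two maps of the composite at the cochain level using this common resolution. Taking $P_*$ itself as the chosen $\Z\Gamma'$-resolution, the restriction homomorphism $i^*$ is induced simply by the inclusion of cochain groups $\operatorname{Hom}_{\Z\Gamma}(P_*,\Coind_{\Gamma'}^{\Gamma}M)\hookrightarrow\operatorname{Hom}_{\Z\Gamma'}(P_*,\Coind_{\Gamma'}^{\Gamma}M)$, while $\alpha_*$ is post-composition with $\alpha$. Hence the composite carries a $\Gamma$-equivariant cochain $\phi$ to $\alpha\circ\phi$, which is precisely the natural adjunction bijection
$$\operatorname{Hom}_{\Z\Gamma}\!\left(N,\operatorname{Hom}_{\Z\Gamma'}(\Z\Gamma,M)\right)\xrightarrow{\ \cong\ }\operatorname{Hom}_{\Z\Gamma'}(N,M),\qquad \phi\mapsto\alpha\circ\phi,$$
expressing that $\Coind_{\Gamma'}^{\Gamma}$ is right adjoint to restriction, applied in each degree with $N=P_n$. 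Its inverse sends a $\Gamma'$-map $\psi$ to the $\Gamma$-map $n\mapsto(g\mapsto\psi(gn))$.

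Since this adjunction is natural in $N$, it is an isomorphism in every degree and commutes with the differentials induced by $\partial_*$; therefore it is an isomorphism of cochain complexes, and passing to cohomology gives the asserted isomorphism. The step I expect to demand the most care is confirming that the abstractly defined restriction map $i^*$ genuinely reduces to the inclusion of cochain groups once the resolution is shared — that is, that the identity chain map serves as a legitimate $\Gamma'$-equivariant lift of $\mathrm{id}_{\Z}$ — together with the routine verification that $\phi\mapsto\alpha\circ\phi$ indeed lands in $\Gamma'$-equivariant maps and is inverted by the formula above.
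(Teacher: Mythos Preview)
The paper does not supply its own proof of this statement: Theorem~\ref{Shapiro} is simply quoted as Shapiro's lemma with a reference to Brown~\cite{Br}, Proposition~6.2, and then used as a black box in the proof of Lemma~\ref{group homomorphism}. Your argument is correct and is in fact the classical proof one finds in \cite{Br}: the monomorphism makes $\Z\Gamma$ free over $\Z\Gamma'$, so a single projective resolution computes both cohomologies, and the restriction--coinduction adjunction $\operatorname{Hom}_{\Z\Gamma}(N,\Coind_{\Gamma'}^{\Gamma}M)\cong\operatorname{Hom}_{\Z\Gamma'}(N,M)$, realized exactly by $\phi\mapsto\alpha\circ\phi$, gives an isomorphism of cochain complexes. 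Your identification of the composite $\alpha_*\circ i^*$ with the adjunction unit on cochains is the key point, and your remark that the identity on $P_*$ serves as the $\Gamma'$-equivariant lift needed for $i^*$ is exactly what justifies the first step. There is nothing to compare against in the paper itself.
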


\section{(Co)homological dimension of retraction homomorphisms}

We recall the {\em cohomological dimension} $\cd(\phi)$ of a group homomorphism $\phi:\Gamma\to\Lambda$ was introduced by Mark Grant ~\cite{Gr} as the maximum of $k$ such that
there is a $\Z\Lambda-$module $M$ with  the nonzero  induced  homomorphism $\phi^*:H^k(\Lambda,M)\to H^k(\Gamma,M)$. When $\phi$ is the identity homomorphism, we recover the classical cohomological dimension of a discrete group $\Gamma$. 

Similarly, one can define the {\em homological dimension} of a group homomorphism $\phi:\Gamma\to\Lambda$ is the maximum of $k$ such that
there is a $\Z\Lambda-$module $M$ with  the nonzero  induced  homomorphism $\phi_*:H_k(\Gamma,M)\to H_k(\Lambda,M)$.

We give the proof for cohomology since the proof for homology is exactly the same with slight modifications. 

Given a group homomorphism $\phi:\Gamma\to \Lambda$. Define a subhomomorphism $\phi':=\phi|_{\Gamma'}$ where $\Gamma'$ is a subgroup of $\Gamma$. 
\begin{lemm}\label{group homomorphism}
    Given a group homomorphism $\phi:\Gamma\to \Lambda$. Any a subhomomorphism $\phi':\Gamma'\to\Lambda'$, $\cd(\phi')\leq \cd(\phi)$.  
\end{lemm}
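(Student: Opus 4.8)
The plan is to show $\cd(\phi')\le\cd(\phi)$ by promoting a module that witnesses the cohomological dimension of the ``small'' homomorphism $\phi'\colon\Gamma'\to\Lambda'$ to one that witnesses it for $\phi\colon\Gamma\to\Lambda$, using coinduction together with Theorem \ref{Shapiro} to keep the relevant cohomology group intact. Write $j\colon\Gamma'\hookrightarrow\Gamma$ for the inclusion and $i\colon\Lambda'\hookrightarrow\Lambda$ for the inclusion of a subgroup $\Lambda'$ containing $\phi(\Gamma')$ (e.g.\ $\Lambda'=\phi(\Gamma')$), so that the defining relation of a subhomomorphism is the commuting square $i\circ\phi'=\phi\circ j$.

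I would set $k=\cd(\phi')$ and fix a $\Z\Lambda'$-module $N$ with $\phi'^{*}\colon H^{k}(\Lambda',N)\to H^{k}(\Gamma',N)$ nonzero, where $N$ is viewed as a $\Z\Gamma'$-module via $\phi'$. The candidate coefficient module for $\phi$ is the coinduced module $M=\Coind_{\Lambda'}^{\Lambda}N=\operatorname{Hom}_{\Z\Lambda'}(\Z\Lambda,N)$, regarded as a $\Z\Lambda$-module and hence as a $\Z\Gamma$-module via $\phi$. First I would apply Theorem \ref{Shapiro} to the monomorphism $i\colon\Lambda'\hookrightarrow\Lambda$ to obtain the Shapiro isomorphism $\sigma_{\Lambda}=\alpha_{*}\circ i^{*}\colon H^{k}(\Lambda,M)\xrightarrow{\ \cong\ }H^{k}(\Lambda',N)$, where $\alpha\colon M\to N$ is the evaluation $f\mapsto f(e)$.

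Next I would assemble the commutative rectangle
$$
\begin{tikzcd}
H^{k}(\Lambda,M)\arrow[r,"i^*"]\arrow[d,"\phi^*"'] & H^{k}(\Lambda',M)\arrow[r,"\alpha_*"]\arrow[d,"\phi'^*"] & H^{k}(\Lambda',N)\arrow[d,"\phi'^*"]\\
H^{k}(\Gamma,M)\arrow[r,"j^*"'] & H^{k}(\Gamma',M)\arrow[r,"\alpha_*"'] & H^{k}(\Gamma',N)
\end{tikzcd}
$$
whose left square commutes by functoriality of group cohomology applied to the identity $i\circ\phi'=\phi\circ j$, and whose right square commutes because $\alpha$ is a $\Z\Lambda'$-module map (hence a $\Z\Gamma'$-module map via $\phi'$), so that $\alpha_{*}$ is natural with respect to $\phi'^{*}$. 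The top edge is exactly $\sigma_{\Lambda}$, an isomorphism; since the right-hand $\phi'^{*}$ is nonzero by the choice of $N$, the composite $\phi'^{*}\circ\sigma_{\Lambda}$ along the top and right edges is nonzero. By commutativity it equals $(\alpha_{*}\circ j^{*})\circ\phi^{*}$, and therefore $\phi^{*}\colon H^{k}(\Lambda,M)\to H^{k}(\Gamma,M)$ cannot vanish. This yields $\cd(\phi)\ge k=\cd(\phi')$, as desired.

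The main obstacle is purely bookkeeping about module structures: I must verify that the single module $M$ carries all four group actions consistently, i.e.\ that restricting $M$ as a $\Z\Gamma$-module (via $\phi$) along $j$ agrees with restricting $M$ as a $\Z\Lambda'$-module along $\phi'$, which is forced by $\phi\circ j=i\circ\phi'$, and that the evaluation $\alpha$ stays equivariant after these restrictions. I anticipate no need for any double-coset or Mackey analysis, because I never claim that the bottom composite $\alpha_{*}\circ j^{*}$ is an isomorphism—only that the rectangle commutes, which already suffices to transport non-vanishing from $\phi'^{*}$ to $\phi^{*}$.
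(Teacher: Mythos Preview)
Your proposal is correct and is precisely the argument the paper has in mind: the paper's proof is the single sentence ``follows from the naturality of the Shapiro lemma,'' and what you have written is a careful unpacking of that naturality---coinducing the witnessing $\Lambda'$-module up to $\Lambda$, using Theorem~\ref{Shapiro} for the top isomorphism, and using the commuting square $i\circ\phi'=\phi\circ j$ plus naturality of $\alpha_*$ to push non-vanishing through to $\phi^*$.
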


\begin{proof}
    The proof of the lemma follows from the naturality of the Shapiro lemma, Theorem \ref{Shapiro}. 
\end{proof}

\begin{thm}
 Let $\phi:\Gamma\to\Lambda$ be a homomorphism between the groups $\Gamma,\Lambda$. Let $\phi':\Gamma'\to\Lambda'$ be retraction homomorphism where $\Gamma', \Lambda'$ are subgroups of $\Gamma, \Lambda$ respectively. Then $\cd(\phi)=\cd(\phi')$.    
\end{thm}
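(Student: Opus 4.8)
The plan is to establish the two inequalities $\cd(\phi')\le\cd(\phi)$ and $\cd(\phi)\le\cd(\phi')$ separately. First I would unwind the retraction data. Since $\Gamma'\le\Gamma$ and $\Lambda'\le\Lambda$, write $i\colon\Gamma'\hookrightarrow\Gamma$ and $j\colon\Lambda'\hookrightarrow\Lambda$ for the inclusions and let $r_\Gamma\colon\Gamma\to\Gamma'$, $r_\Lambda\colon\Lambda\to\Lambda'$ be the retractions, so that $r_\Gamma i=\mathrm{id}_{\Gamma'}$ and $r_\Lambda j=\mathrm{id}_{\Lambda'}$. Read as a retract in the arrow category, $\phi'$ comes with two commuting squares: the section square $\phi\circ i=j\circ\phi'$ and the retraction square $\phi'\circ r_\Gamma=r_\Lambda\circ\phi$. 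On cohomology these yield, for any coefficient module and after matching the induced coefficients, the identities $i^*\circ r_\Gamma^*=\mathrm{id}$ and $j^*\circ r_\Lambda^*=\mathrm{id}$; that is, the restriction maps are split surjective with splittings supplied by the retractions.

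The inequality $\cd(\phi')\le\cd(\phi)$ is immediate from Lemma \ref{group homomorphism}: the section square exhibits $\phi'$ as a subhomomorphism of $\phi$, namely its restriction to the subgroups $\Gamma'$ and $\Lambda'$, so the bound follows from the naturality of Shapiro's lemma already invoked there.

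For the reverse inequality $\cd(\phi)\le\cd(\phi')$ I would argue by transferring a witnessing module. Let $k=\cd(\phi)$ and pick a $\Z\Lambda$-module $M$ together with a class $u\in H^k(\Lambda,M)$ with $\phi^*(u)\ne 0$ in $H^k(\Gamma,\phi^*M)$. Set $M'=j^*M$, the restriction of $M$ to $\Lambda'$ along $j$. Applying cohomology to the section square gives the relation $\phi'^*\circ j^*=i^*\circ\phi^*$, both sides landing in $H^k(\Gamma',\phi'^*M')$ because $(\phi i)^*M=(j\phi')^*M$. It therefore suffices to show that $i^*(\phi^*(u))$ is nonzero, for then $\phi'^*\ne 0$ in degree $k$ and $\cd(\phi')\ge k$. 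To obtain this I would feed the splittings $i^*\circ r_\Gamma^*=\mathrm{id}$ and $j^*\circ r_\Lambda^*=\mathrm{id}$ into the retraction square and use the coinduced modules together with Theorem \ref{Shapiro} to realize $H^k(\Gamma',\phi'^*M')$ as a retract of $H^k(\Gamma,\phi^*M)$ compatibly with $\phi^*$ and $\phi'^*$.

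The main obstacle is precisely this reverse inequality, and the difficulty is twofold. First, the witnessing module $M$ is an arbitrary $\Z\Lambda$-module and need not be pulled back from $\Lambda'$, so the splitting $j^*\circ r_\Lambda^*=\mathrm{id}$, which lives over $\Lambda'$, does not directly control the class $u$ over $\Lambda$; one must compare $M$ with $(j r_\Lambda)^*M$ through the idempotent $j r_\Lambda$. Second, Theorem \ref{Shapiro} is naturally formulated for the monomorphisms $i,j$, whereas the maps doing the work here are the epimorphisms $r_\Gamma,r_\Lambda$, so the transfer has to be run through the retraction square rather than the section square. Everything reduces to showing that the restriction $i^*$ does not annihilate $\phi^*(u)$, and this is the step where the retraction structure must be used decisively; I expect the careful bookkeeping of the coefficient systems under $i^*,j^*,r_\Gamma^*,r_\Lambda^*$ to be the most delicate part of the argument.
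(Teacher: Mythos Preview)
Your appeal to Lemma~\ref{group homomorphism} for $\cd(\phi')\le\cd(\phi)$ matches the paper exactly.

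For the remaining direction your strategy runs opposite to the paper's, and the obstacle you identified is real rather than mere bookkeeping. You fix $k=\cd(\phi)$, a $\Z\Lambda$-module $M$, and a class $u$ with $\phi^*(u)\ne0$, and then try to descend by restricting $M$ along $j$ and showing $i^*(\phi^*(u))\ne0$. But the splitting you have is $i^*\circ r_\Gamma^*=\mathrm{id}$, which makes $i^*$ split \emph{surjective}, not injective; nothing prevents $\phi^*(u)$ from lying in $\ker i^*$. Theorem~\ref{Shapiro} only identifies the \emph{image} of restriction with cohomology in the coinduced module, so it gives you no control over that kernel. Your observation that $H^k(\Gamma',\phi'^*M')$ is a retract of $H^k(\Gamma,\phi^*M)$ is correct but insufficient for exactly this reason: a retract projection can kill the specific class you care about.

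The paper transfers in the opposite direction. It fixes $k=\cd(\phi')$ and a $\Z\Lambda'$-module $M$ with $\phi'^*$ nonzero in degree $k$, and then \emph{inflates} $M$ to a $\Z\Lambda$-module through the surjective retraction $r_\Lambda$. Because $r_\Lambda\circ j=\mathrm{id}_{\Lambda'}$, restricting this inflated module back to $\Lambda'$ returns $M$ on the nose, so the coefficient mismatch you worried about simply does not arise. One then uses the retraction square $\phi^*\circ r_\Lambda^*=r_\Gamma^*\circ\phi'^*$ on cohomology, where now $r_\Gamma^*$ and $r_\Lambda^*$ are split \emph{injective} (split by $i^*$ and $j^*$), and argues---the paper phrases it as a contradiction, assuming $\phi^*$ vanishes on $H^k(\Lambda,M)$---that a nonzero witness for $\phi'^*$ produces a nonzero witness for $\phi^*$ in the same degree. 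The idea missing from your proposal is therefore to lift the witnessing module along the retraction $r_\Lambda$ rather than to restrict an arbitrary $\Lambda$-module along the inclusion $j$.
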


\begin{proof}
    By Lemma \ref{group homomorphism}, we get $\cd(\phi')\leq\cd(\phi)$. 

Suppose $\cd(\phi')=k.$ Then, there is a $\Z\Lambda'-$ module $M$ with  the nonzero  induced  homomorphism $\phi^*:H^k(\Lambda',M)\to H^k(\Gamma',M)$. Since $\phi'$ is a retraction map, we get the following commutative diagram

$$
\begin{tikzcd}
\Gamma \arrow[r,"\phi"] \arrow[d,"r_\Gamma"'] & \Lambda \arrow[d,"r_\Lambda"]\\
\Gamma' \arrow[r,"\phi'"'] & \Lambda'
\end{tikzcd}
\quad \text{with} \quad
\phi' \circ r_\Gamma = r_\Lambda \circ \phi.
$$

Since $r_{\Lambda}$ is surjective, $Z\Lambda'$-module M can be considered $Z\Lambda$-module M. Thus, we get the following commutative diagram 
  
$$
\begin{tikzcd}
H^{k}(\Gamma,M)   &   H^{k}(\Lambda,M) \arrow[l,"\phi^{*}"]\\
H^{k}(\Gamma',M) \arrow[u,"r_\Gamma^{*}"] & H^{k}(\Lambda',M) \arrow[l,"\phi'^{*}"] \arrow[u,"r_\Lambda^{*}"]
\end{tikzcd}
$$

Since retraction $r$ and inclusion $i:\Lambda'\to \Lambda$ induce surjective and injective homomorphisms in cohomology with any coefficients, we get the following diagram 

$$
\begin{tikzcd}
H^{k}(\Gamma,M)   &   H^{k}(\Lambda,M) \arrow[l,"\phi^{*}"] \arrow[d,"i_\Lambda^{*}"]\\
H^{k}(\Gamma',M) \arrow[u,"r_\Gamma^{*}"] & H^{k}(\Lambda',M) \arrow[l,"\phi'^{*}"] 
\end{tikzcd}
$$

Suppose $\cd(\phi)\neq k$. Then $\phi(a)=0$ for all elements of $H^{k}(\Lambda, M)$.

Pick an element $a_{1}\in H^{k}(\Lambda, M)$ with $b_{1}=i_{\Lambda}^{*}(a)\neq 0$ since $i_{\Lambda}^{*}$ is an injective homomorphism. ${\phi'}^{*}(b)\neq 0$ by $\cd(\phi')=k$. This is a contradiction since the diagram is a commutative diagram, i.e. $0=\phi^{*}(a_{1})=(r^{*}\circ{\phi'}^{*}\circ i_{\Lambda}^{*})(a)\neq 0$. Therefore, we get $\cd(\phi)=k$. This proves the Theorem.

\end{proof}

\section{Lusternik-Schnirelmann category of retraction maps}

Given  a subspace $X'$ of X i.e. $X'\subset X$, we do not have the monotonicity property $\cat(X')\leq\cat(X).$ Take $X$ to be a disc $D^{2}=\{(x,y)\in R^{2}|x^{2}+y^{2}\leq 1\}$, and $X'$ to be a boundary of disc $D^{2}$, $\{(x,y)\in R^{2}|x^{2}+y^{2}=1\}$. It is easy to see $\cat(X')=1$, $\cat(X)=0$.

If we assume that there is a retraction $r:X\to X'$, then we have the monotonicity property. 

\begin{lemm}
   Given a subspace $X'$ of a topological space X with a retraction $r:X\to X'$. Then $\cat(X')\leq\cat(X)$. 
\end{lemm}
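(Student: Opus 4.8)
The plan is to start from an optimal categorical cover of $X$ and push it down to $X'$ using the retraction. Suppose $\cat(X) = n$, and let $U_0, \dots, U_n$ be an open cover of $X$ in which each $U_i$ is contractible in $X$. I would set $V_i := U_i \cap X'$. Each $V_i$ is open in the subspace $X'$, and the collection $\{V_i\}$ covers $X'$ because the $U_i$ cover $X$; this already produces a cover of the correct cardinality $n+1$, so the only thing left to verify is that each $V_i$ is contractible \emph{in $X'$}.

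The key observation is that the inclusion $j_i : V_i \hookrightarrow X'$ factors as $j_i = r \circ \iota_i$, where $\iota_i : V_i \hookrightarrow X$ is the inclusion into the ambient space and $r : X \to X'$ is the retraction. This uses the defining property $r|_{X'} = \mathrm{id}_{X'}$, which gives $r \circ i = \mathrm{id}_{X'}$ for the inclusion $i: X' \hookrightarrow X$; since $\iota_i = i \circ j_i$, we get $r \circ \iota_i = r \circ i \circ j_i = j_i$. Now $\iota_i$ is null-homotopic in $X$: it factors as $V_i \hookrightarrow U_i \hookrightarrow X$, and the second inclusion is null-homotopic by hypothesis, so there is a homotopy $H : V_i \times I \to X$ from $\iota_i$ to a constant map.

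To finish I would simply compose this homotopy with the retraction. The map $r \circ H : V_i \times I \to X'$ is a homotopy taking place in $X'$ from $r \circ \iota_i = j_i$ to a constant map, so $V_i$ is contractible in $X'$. As each of the $n+1$ sets $V_i$ is contractible in $X'$, we conclude $\cat(X') \leq n = \cat(X)$.

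I do not anticipate a serious obstacle here; the entire argument rests on the single identity $r \circ i = \mathrm{id}_{X'}$, which lets one transport a null-homotopy living in the large space $X$ down into the subspace $X'$ by postcomposition with $r$. The one point that warrants a line of care is the interpretation of ``contractible in $X$'' as the inclusion being null-homotopic (rather than the subset being contractible as a space on its own), so that passing from $U_i$ to $V_i \subseteq U_i$ preserves the property; this is immediate from the fact that a null-homotopy precomposed with the inclusion $V_i \hookrightarrow U_i$ remains a null-homotopy.
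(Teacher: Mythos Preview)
Your proof is correct and is essentially the same argument the paper has in mind: the paper's one-line proof (``a retract of a contractible space is contractible'') is shorthand for exactly the construction you carry out, namely intersecting a categorical cover of $X$ with $X'$ and postcomposing the null-homotopies with $r$ to land in $X'$. Your write-up simply makes explicit the step $j_i = r \circ \iota_i$ and the passage from ``contractible in $X$'' to ``contractible in $X'$'' that the paper leaves implicit.
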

\begin{proof}
 The proof follows from the following observation: A retract of contractible space is contractible.   
 \end{proof}

The LS-category of a topological space extended to the map, Fox \cite{CLOT} gave the following definition. 

 \begin{defn}
 The Lusternik-Schnirelmann category of map f, $\cat(f)$, between X and Y topological spaces is the minimal number n such that X admits an open cover by n+1 open sets $U_{0},U_{1},..., U_{n}$ and restriction of f to each $U_{i}$ is null-homotopic. 
\end{defn}

\textbf{Remark 1:} $\cat(X)=\cat(\mathrm{Id})$ where $\mathrm{Id}: X\rightarrow X$ (identity map)

Let $f:X\to Y$ be map and define $f':=f|X'$ where $X'$ subset of a topological space X. 
We recall the definition of a retraction map below:
\begin{defn}
  We call $f:X\to Y$ is a retraction map if we have the following commuative diagram $$r_{Y}\circ f=f'\circ r_{X}$$ where $f':=f|_{X'}$ namely $f':X'\to Y'$, and $r_{X}:X\to X'$ and $r_{Y}:Y\to Y'$ retractions of X and Y respectively.  
\end{defn}

With above retraction map, we can state the generalized monotonicty property for LS-category of maps.

\begin{thm}
    Given $f:X\to Y$ a retraction map with $f':X'\to Y'$. Then $$\cat(f')\leq\cat(f).$$
\end{thm}
\begin{proof}
Suppose $\cat(f)=n$. Then there exists $n+1$ open subsets of X, name $U_{0},U_{1},\cdots,U_{n}$ such that the union of them covers X, i.e $X=\bigcup_{i=0}^{n}U_{i}$ and the restriction of the map $f$ to each $U_{i}$ is a null-homotopic. In another words, there is a homotopy $H_{i}(x,t)$ between $f|_{U_{i}}$ and contant map. 

For each i, we define $V_{i}=X'\cap U_{i}$. It is easy to see $V_{i}$ are open subset of $X'$ and covers $X'$. 

Claim: $f'|_{V_{i}}$ are null-homotopic for all i. 

For each $V_{i}$, we can explicitly define a homopy $F_{i}:V_{i}\times I\to Y$ by $F_{i}(x,t)=r_{Y}\circ H_{i}(x,t)$ where $r_{Y}:Y\to Y'$ is a retraction.

Since $f:X\to Y$ is a retraction map, we verify the following:
$$F_{i}(x,0)=r_{Y}\circ H_{i}(x,0)=r_{Y}\circ f(x)=f'\circ r_{X}(x)=f'(x)$$

$$F_{i}(x,1)=r_{Y}\circ H_{i}(x,1)=r_{Y}\circ y_{0}=r_{Y}(y_{0})$$

Therefore, $f'|_{V_{i}}$ are null-homotopic for all i. This follows $\cat(f')\leq n$. This proves the Theorem. 
    
\end{proof}

\section{Sequential topological complexity of retraction maps}

Let $f:X\to Y$ be a map. Let $X^{r}$ and $Y^{r}$ be the Cartisian product of $r$ copies of $X$ and $Y$ respectively, i.e $X^{r}:=X\times\cdots\times X$ and $Y^{r}:=Y\times\cdots\times Y.$ Let us denote $f^{r}:=f\times\cdots\times f:X^{r}\to Y^{r}$ and elements of $X^{r}$ and $Y^{r}$ are vectors $\bar{x}=(x_{0},\cdots,x_{r-1})$ and $\bar{y}=(y_{0},\cdots,y_{r-1})$ respectively. Let $PY$ be a based-point path space, i.e. $\{\gamma:[0,1]\to Y|\gamma(0)=y_{0}\}$.  

\begin{defn}

\begin{enumerate} 
\item A {\em sequential $f$-motion planner} on a subset $U\subset X^{r}$ is a map $f_{U}:U\to PY$ such that $f_{U}(\bar{x})(\frac{j}{r-1})=f_{U}(x_{0},x_{1},\cdots,x_{r-1})(\frac{j}{r-1})=f(x_{j})$ for all $j=0,\cdots,r-1.$

\item The {\em sequential topological complexity of map f}, denoted $TC_{r}(f),$ is the minimal number $k$ such that $X^{r}$ is covered by $k+1$ open sets $U_{0},\cdots,U_{k}$ on which there are sequential $f$-motion planners. If no such $k$ exists, we set $TC_{r}(f)=\infty.$
\end{enumerate}

\end{defn}
Note that if $r=2,$ we recover Scott's topological complexity for a map. Further, if map $f$ is identity on space $X,$ we get Rudyak's sequential topological complexity of space $X.$ We need the following technical theorem to prove our main theorem.

\begin{thm}\label{sequential TC}
 Let $f:X\to Y$ be a map, and let $U\subset X^{r}$. The following are equivalent:
 \begin{enumerate}
     \item There is a sequential $f$-motion planner $f_{U}:U\to PY$.
     \item The projections from $f^{r}(U)$ to the j+1 factor of $Y^{r}$ are homotopic, where $j=0,..,r-1$. 
     \item  $f^{r}|_{U}$ can be deformed into the diagonal $\Updelta Y$ of $Y^{r}$. 
       
 \end{enumerate}
\end{thm}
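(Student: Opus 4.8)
The plan is to prove the three statements equivalent by establishing the cyclic chain $(1)\Rightarrow(2)\Rightarrow(3)\Rightarrow(1)$. Throughout, the key observation is that for $j=0,\dots,r-1$ the composite $p_{j+1}\circ f^{r}|_{U}\colon U\to Y$, where $p_{j+1}\colon Y^{r}\to Y$ is the projection onto the $(j+1)$-st factor, is exactly the map $g_{j}\colon\bar{x}\mapsto f(x_{j})$. With this identification, statement (2) becomes the assertion that the maps $g_{0},\dots,g_{r-1}\colon U\to Y$ are mutually homotopic, and the evaluation conditions on a motion planner are precisely the values $g_{j}(\bar{x})=f(x_{j})$. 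I read $PY$ here as the space of (free) paths in $Y$, so that a sequential $f$-motion planner is the same data as a lift of $f^{r}|_{U}$ through the evaluation fibration $Y^{[0,1]}\to Y^{r}$, $\gamma\mapsto\bigl(\gamma(0),\gamma(\tfrac{1}{r-1}),\dots,\gamma(1)\bigr)$.

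For $(1)\Rightarrow(2)$ I would take a motion planner $f_{U}$ and form the single homotopy $H\colon U\times[0,1]\to Y$, $H(\bar{x},t)=f_{U}(\bar{x})(t)$. Since $H(\bar{x},j/(r-1))=f(x_{j})=g_{j}(\bar{x})$, restricting $H$ to each subinterval $[j/(r-1),(j+1)/(r-1)]$ and rescaling linearly exhibits a homotopy $g_{j}\simeq g_{j+1}$; chaining these shows all $g_{j}$ are homotopic. For $(2)\Rightarrow(3)$, I would choose homotopies $G_{j}\colon g_{j}\simeq g_{0}$ (with $G_{0}$ the constant homotopy) and assemble them into $D(\bar{x},t)=(G_{0}(\bar{x},t),\dots,G_{r-1}(\bar{x},t))$; then $D(\cdot,0)=(g_{0},\dots,g_{r-1})=f^{r}|_{U}$ while $D(\cdot,1)=(g_{0},\dots,g_{0})$ lands in the diagonal $\Updelta Y$, so $f^{r}|_{U}$ is deformed into $\Updelta Y$.

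The substantive direction is $(3)\Rightarrow(1)$. Given a deformation $D=(D_{0},\dots,D_{r-1})\colon U\times[0,1]\to Y^{r}$ with $D(\bar{x},0)=f^{r}(\bar{x})$ and $D(\bar{x},1)\in\Updelta Y$, write $c(\bar{x})$ for the common value $D_{j}(\bar{x},1)$, which is independent of $j$; for each $j$ the path $t\mapsto D_{j}(\bar{x},t)$ then runs from $f(x_{j})$ to $c(\bar{x})$. I would build the planner path $f_{U}(\bar{x})\in Y^{[0,1]}$ piecewise: on each interval $[j/(r-1),(j+1)/(r-1)]$ let it first traverse $D_{j}(\bar{x},-)$ from $f(x_{j})$ to $c(\bar{x})$ and then the reverse of $D_{j+1}(\bar{x},-)$ from $c(\bar{x})$ to $f(x_{j+1})$, each reparametrized onto a half of the subinterval. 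By construction $f_{U}(\bar{x})(j/(r-1))=f(x_{j})$ for every $j$, so $f_{U}$ is a sequential $f$-motion planner.

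The main obstacle is precisely this last construction: I must verify that the piecewise-defined, reparametrized concatenation is jointly continuous in $(\bar{x},t)$ and agrees at the gluing times $j/(r-1)$, where both adjacent pieces take the value $f(x_{j})$; this follows from continuity of $D$ together with the compact-open adjunction $\mathrm{Map}(U\times[0,1],Y)\cong\mathrm{Map}(U,Y^{[0,1]})$. A secondary point worth flagging is the basepoint convention: the path $f_{U}(\bar{x})$ starts at $f(x_{0})$ rather than a fixed $y_{0}$, so the equivalence is most naturally phrased with the free path space $Y^{[0,1]}$; under the based convention one restricts to the locus where $f(x_{0})=y_{0}$ (or prepends a fixed path from $y_{0}$), and the same argument applies verbatim.
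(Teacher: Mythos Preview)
Your proposal is correct and follows essentially the same cyclic scheme $(1)\Rightarrow(2)\Rightarrow(3)\Rightarrow(1)$ as the paper, with the same constructions: restricting the planner to subintervals for $(1)\Rightarrow(2)$, assembling coordinatewise homotopies into the diagonal for $(2)\Rightarrow(3)$, and concatenating forward/reverse tracks of the deformation for $(3)\Rightarrow(1)$. Your step $(2)\Rightarrow(3)$ is in fact tidier than the paper's (you take $G_{0}$ constant rather than using the $t(1-t)$ reparametrization), and your remark about the free-path versus based-path convention for $PY$ correctly identifies an ambiguity in the paper's own setup.
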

\begin{proof} 

\textbf{$(1 \Rightarrow 2)$} Let $pr_{j}:f^{r}(U)\subset Y^{r}\to Y$ be a projection onto the $j^{th}$ factor of $Y^{r}$. It suffices to show that $pr_{j}$ is homotopic to $pr_{j+1}$ for all $j=0,..,r-2.$ By (2), there is a sequential $f$-motion planner on $U$ and let $\bar{x}=(x_{0},\cdots,x_{r-1})\in U.$

We define the homotopy $$H_{j}(\bar{x},t):=f_{U}(\bar{x})\big(\frac{t+j}{r-1}\big).$$ Then 
$H_{j}(\bar{x},0)=f_{U}(\bar{x})(\frac{j}{r-1})=f(x_{j})=pr_{j}(f^{r}(\bar{x}))$
and
$H_{j}(\bar{x},1)=f_{U}(\bar{x})(\frac{j+1}{r-1})=f(x_{j+1})=pr_{j+1}(f^{r}(\bar{x})).$ Thus, all projections from $f^{r}(U)$ to the $j^{th}$ factor of $Y^{r}$ are homotopic for $j=0,\cdots,r-1.$

\textbf{$(2 \Rightarrow 3)$} Since any two projections from $f^{r}(U)$ to the $j^{th}$ factor of $Y^{r}$ are homotopic, we fix a homotopy $H_{j}:f^{r}(U)\times I\to Y^{r}$ from $pr_{j}$ to $pr_{j+1}$ for $j=0,\cdots,r-2.$ For given $\bar{x}\in U,$ this homotopy $H_{i}(f^{r}(\bar{x}),t)$ is a path from $f(x_{j})$ to $f(x_{j+1})$ and denote this path as $\alpha_{\bar{x}}^{j}.$ We define a concatenation of paths as $\gamma_{\bar{x}}^{i}=*^{i}_{j=0}\alpha_{\bar{x}}^{j}.$ Note that $\gamma_{\bar{x}}^{i}$ is a path from $f(x_{0})$ to $f(x_{i}).$ Now we define a homotopy $H:f^{r}(U)\times I\to Y^{r}$ as

$$H(f^{r}(\bar{x}),t)=(H_{0}(f^{r}(\bar{x}),t(1-t)),\gamma_{\bar{x}}^{1}(1-t),\cdots,\gamma_{\bar{x}}^{r-1}(1-t)).$$

Then $H(f^{r}(\bar{x}),0)=(H_{0}(f^{r}(\bar{x}),0),\gamma_{\bar{x}}^{1}(1),\cdots,\gamma_{\bar{x}}^{r-1}(1))=(f(x_{0}),f(x_{1}),\cdots,f(x_{r-1}))=f^{r}(\bar{x})$ and 
$H(f^{r}(\bar{x}),1)=(H_{0}(f^{r}(\bar{x}),0),\gamma_{\bar{x}}^{1}(0),\cdots,\gamma_{\bar{x}}^{r-1}(0))=(f(x_{0}),f(x_{0}),\cdots,f(x_{0}))\in \Updelta(Y).$ This gives a deformation of $f^{r}|_{U}$ into $\Updelta(Y),$ showing (3). 

\textbf{$(3 \Rightarrow 1)$} Let $H:U\times I\to Y^{r}$ be a deformation of $f^{r}|_{U}$ to $\Updelta(Y).$ We define a map $f_{U}:U\to Y^{r}$ 
$$f_{U}(\bar{x})(t)=\begin{cases} pr_{j}\circ H(\bar{x}, 2(r-1)t-2j) & \mbox{if} \, \, \frac{j}{r-1}\leq t \leq \frac{2j+1}{2(r-1)} \\ pr_{j+1}\circ H(\bar{x}, 2j+2-2(r-1)t) & \mbox{if} \, \, \frac{2j+1}{2(r-1)}\leq t \leq \frac{j+1}{(r-1)}\end{cases}$$
where $j=0,\cdots,r-2$ and $pr_{j}$ and $pr_{j+1}$ are projection of $Y^{r}$ to $Y$ with $j^{th}$ and $(j+1)^{th}$ coordinates respectively. Then $f_{U}$ is well-defined and continous since $H(\bar{x},t)\in \Updelta(Y)$ for all $\bar{x}\in U.$ Moreover, 
$$f_{U}(\bar{x})(\frac{j}{r-1})=pr_{j}\circ H(\bar{x}, 2(r-1)\frac{j}{r-1}-2j)=pr_{j}\circ H(\bar{x},0)=pr_{j}(f^{r}(\bar{x}))=f(x_{j}),$$ and 
$$f_{U}(\bar{x})(\frac{j+1}{r-1})=pr_{j+1}\circ H(\bar{x}, 2j+2-2(r-1)\frac{j+1}{r-1})=pr_{j+1}\circ H(\bar{x},0)=pr_{j+1}(f^{r}(\bar{x}))=f(x_{j+1})$$
so that $f_{U}$ is an sequential $f$-motion planner on U.

\end{proof}

\begin{thm}
    Given $f:X\to Y$ a retraction map with $f':X'\to Y'$. Then $$TC_{r}(f')\leq TC_{r}(f).$$
\end{thm}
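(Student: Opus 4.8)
The plan is to follow the same strategy as the proof of the preceding theorem on $\cat$. Suppose $TC_r(f) = k$, so that $X^r$ is covered by $k+1$ open sets $U_0,\dots,U_k$ carrying sequential $f$-motion planners $f_{U_i}:U_i\to PY$ satisfying $f_{U_i}(\bar{x})(\tfrac{j}{r-1}) = f(x_j)$ for all $j$. Since $X'\subseteq X$ yields $(X')^r\subseteq X^r$, I would set $V_i = U_i\cap (X')^r$; each $V_i$ is open in $(X')^r$ and the family $\{V_i\}_{i=0}^{k}$ covers $(X')^r$. It then suffices to produce a sequential $f'$-motion planner on every $V_i$, for this gives $TC_r(f')\le k$.

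On each $V_i$ I would define the candidate planner by postcomposing with the retraction $r_Y$, namely $f'_{V_i}(\bar{x}')(t) = r_Y\bigl(f_{U_i}(\bar{x}')(t)\bigr)$ for $\bar{x}' = (x_0',\dots,x_{r-1}')\in V_i$, where each $x_j'\in X'$. Because $r_Y$ maps $Y$ into $Y'$, this is a genuine path in $Y'$, and it depends continuously on $\bar{x}'$ since $f_{U_i}$ and $r_Y$ are continuous; thus $f'_{V_i}:V_i\to PY'$ is well defined into the path space of $Y'$.

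The key step is verifying the motion-planner equation at the parameters $\tfrac{j}{r-1}$. Evaluating gives $f'_{V_i}(\bar{x}')(\tfrac{j}{r-1}) = r_Y\bigl(f_{U_i}(\bar{x}')(\tfrac{j}{r-1})\bigr) = r_Y(f(x_j'))$. Here I would invoke the retraction-map identity $r_Y\circ f = f'\circ r_X$ to rewrite $r_Y(f(x_j')) = f'(r_X(x_j'))$, and then use that $r_X:X\to X'$ restricts to the identity on $X'$, so that $r_X(x_j') = x_j'$ because $x_j'\in X'$. This produces $f'_{V_i}(\bar{x}')(\tfrac{j}{r-1}) = f'(x_j')$, exactly the condition required of a sequential $f'$-motion planner.

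The only real obstacle is this last verification: everything hinges on the two retraction properties interacting correctly, namely the commutativity of the retraction-map square (to pass $r_Y$ across $f$) together with the fact that $r_X$ fixes $X'$ pointwise (to recover $f'(x_j')$ rather than the a priori different value $f'(r_X(x_j'))$). Once these are in place the conclusion $TC_r(f')\le k = TC_r(f)$ is immediate. As an alternative one could run the argument through the characterization of Theorem \ref{sequential TC}, postcomposing a deformation of $f^r|_{U_i}$ into $\Updelta Y$ with $(r_Y)^r$ to deform $(f')^r|_{V_i}$ into $\Updelta Y'$, but the direct motion-planner construction above is the most economical.
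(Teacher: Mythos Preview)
Your proof is correct, but it takes a different route from the paper's. The paper invokes the characterization of Theorem~\ref{sequential TC}: having a sequential $f$-motion planner on $U_i$ is equivalent to $f^r|_{U_i}$ being deformable into $\Updelta Y$, and the paper then pushes such a deformation through $(r_Y)^r$ to land in $\Updelta Y'$, before appealing to Theorem~\ref{sequential TC} again to recover a sequential $f'$-motion planner on $V_i$. You instead stay entirely at the level of motion planners, defining $f'_{V_i}=r_Y\circ f_{U_i}$ directly and checking the endpoint conditions using $r_Y\circ f=f'\circ r_X$ together with $r_X|_{X'}=\mathrm{id}_{X'}$. Your approach is more economical---it bypasses the technical equivalence lemma entirely and mirrors the $\cat$ argument more closely---whereas the paper's detour through deformations into the diagonal makes the geometric content more visible but at the cost of an extra black box. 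You in fact mention the paper's route as your alternative in the last paragraph.
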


\begin{proof}
    Let $TC_{r}(f)=n$. There are $n+1$ open subsets $U_{0},U_{1},\cdots,U_{n}$ that their union covers $X^{r}$ with each $U_{i}$ having sequential f-motion planners. By Theorem \ref{sequential TC}, $f^{r}(U_{i})$ deformed into the diagonal $\Updelta Y$ of $Y^{r}$ for each i. Define $V_{i}:=U_{i}\cap (X')^{r}$ for all $i=1,\cdots,n$. It is easy to see from construction that the union of $V_{i}$ covers $(X')^{r}$. 
    
    Claim: $(f')^{r}(V_{i})$ deforms into the diagonal $\Updelta Y'$ of $(Y')^{r}$ for all i. 

    Since $(f')^{r}(V_{i})\subset f^{r}(U_{i})$ and $f^{r}(U_{i})$ deforms to the diagonal $\Updelta Y$ of $Y^{r}$, we can use the definition of retraction map $f$, i.e.$r_{Y}\circ f=f'\circ r_{X}$, to the diagonal $\Updelta Y$ of $Y^{r}$ deform into the diagonal $\Updelta Y'$ of $(Y')^{r}$. Therefore, we get that $(f')^{r}(V_{i})$ deforms into the diagonal $\Updelta Y'$ of $(Y')^{r}$ for each i. 

    By Theorem \ref{sequential TC} again, we see that for each $V_{i}$ there are sequential f'-motion planners. Therefore, $TC_{r}(f')\leq n.$ This completes the proof of the main theorem.

\end{proof}

\begin{rmk}
   The proof works when $r=2$, which recovers Scott's topological complexity of maps, and when f is the identity $\mathrm{Id}$ recovers Rudyak's sequential topological complexity.    
\end{rmk}

\footnotesize

\end{document}